\documentclass[12 pt]{article}%
\usepackage{amsmath, amsfonts, amsthm, color,latexsym}
\usepackage{amsmath}
\usepackage{amsfonts}
\usepackage{amssymb}
\usepackage{color, soul}
\usepackage[all]{xy}
\usepackage{graphicx}%
\setcounter{MaxMatrixCols}{30}
\providecommand{\U}[1]{\protect\rule{.1in}{.1in}}
\allowdisplaybreaks[4]
\newtheorem{theorem}{Theorem}[section]
\newtheorem{proposition}[theorem]{Proposition}
\newtheorem{corollary}[theorem]{Corollary}
\newtheorem{example}[theorem]{Example}

\newtheorem{remark}[theorem]{Remark}

\newtheorem{lemma}[theorem]{Lemma}
\newtheorem{final remark}[theorem]{Final Remark}
\newtheorem{definition}[theorem]{Definition}
\textwidth=16.1cm
\textheight=23cm
\hoffset=-15mm
\voffset=-20mm
\allowdisplaybreaks[4]


\begin{document}

\title{Operators whose adjoints and second adjoints are almost Dunford-Pettis}
\author{Geraldo Botelho\thanks{Supported by Fapemig grants RED-00133-21 and APQ-01853-23.}\,\, and  Luis Alberto Garcia\thanks{Supported by a CAPES scholarship.\newline 2020 Mathematics Subject Classification: 46B42, 47B07, 47B65.\newline Keywords: Banach lattices, almost Dunford-Pettis operators, almost limited operators, order weakly compact operators.
}}
\date{}
\maketitle

\begin{abstract} First we characterize the Banach lattices $E$ whose biduals have the positive Schur property by means of second adjoints of operators on $E$ being almost Dunford-Pettis. 
Next we extend some known results concerning conditions on the Banach lattices $E$ and $F$ under which the adjoint $T^*$ and the second adjoint $T^{**}$ of any positive almost Dunford-Pettis operator $T \colon E \longrightarrow F$ are almost Dunford-Pettis. 
Finally, we prove when $T^*$ and $T^{**}$ are almost Dunford-Pettis for any (non necessarily almost Dunford-Pettis) $T$ that is either bounded, regular,  order bounded or weakly compact.

\end{abstract}

\section{Introduction} A linear operator between Banach spaces is a {\it Dunford-Pettis operator} if it sends weakly null sequences to norm null sequences. The term {\it completely continuous} is also often used.  The lattice counterpart of this important operator ideal is the class of almost Dunford-Pettis operators: an operator from a Banach lattice to a Banach space is {\it almost Dunford-Pettis} if it sends disjoint weakly null sequences to norm null sequences; or, equivalently, if it sends positive disjoint weakly null sequences to norm null sequences. Almost Dunford-Pettis operators have attracted the attention of many experts, for recent advances see \cite{ardakanichen, khazhak, luismonat, elfahri, pablov, oughajji, retbi, xiang}.

In this paper we address the duality problem for almost Dunford-Pettis operators, which has already been investigated before, see, e.g., \cite{wic, elbour, kamal}. Given a linear operator $T \colon E \longrightarrow F$ between Banach lattices,  our main focus is to establish conditions on $T$ and/or $E$ and/or $F$ so that the adjoint $T^{*} \colon F^* \longrightarrow E^{*}$ and/or the second adjoint $T^{**} \colon E^{**} \longrightarrow F^{**}$ are almost Dunford-Pettis. Some of the results we prove extend known results and some consider conditions which had  not been used before in this context.

To start describing the contents of the paper, recall that a Banach lattice $E$ has the {\it positive Schur property} if positive (or disjoint, or positive disjoint) weakly null sequences in $E$ are norm null. The literature on this property is extensive,  see \cite{joselucasvinicius} for very recent developments. In spite of the numerous known characterizations of the positive Schur property, we are not aware of any characterization of this property on the bidual $E^{**}$ of the Banach lattice $E$ which relies only on properties of $E$. In Section 2 we prove such a characterization by means of $T^{**}$ being almost Dunford-Pettis for every operator $T \colon E \longrightarrow \ell_\infty$.   

In Section  \ref{s3} we  address the problem of characterizing the Banach lattices $E$ and $F$ such that the adjoint $T^*$ of every positive almost Dunford-Pettis operator $T \colon E \longrightarrow F$ is almost Dunford-Pettis. The results we prove extend results from \cite{wic, kamal} and provide new situations where $T^*$ enjoys a property stronger than being almost Dunford-Pettis and situations where $T^{**}$ is almost Dunford-Pettis.

In Section 4 we investigate when $T^{\ast}$ or $T^{**}$ is almost Dunford-Pettis even if $T$ is not. The results we prove show when $T^*$ is almost Dunford-Pettis for $T$ order bounded or regular; and when $T^{**}$ is almost Dunford-Pettis for $T$ bounded order weakly compact or weakly compact.

For basic notions, notation and results on Banach lattices we refer to \cite{alibur, positiveoperators, nieberg}. Operators are always supposed to be linear. The symbol $x = o - \lim\limits_{n \to \infty} x_n$ means that $x$ is the order limit of the order convergent sequence $(x_n)_{n=1}^\infty$. For a Banach space $E$, by $J_E \colon E \longrightarrow E^{**}$ we denote the canonical embedding, which is also a Riesz homomorphism when $E$ is a Banach lattice.

\section{When $E^{**}$ has the positive Schur property} \label{s2}

A Banach space in which every weakly null sequence is norm null is said to have the {\it Schur property}. It follows from \cite[Corollary 11]{mujica2003} that second duals -- hence higher order duals -- of infinite dimensional Banach spaces never have the Schur property. For the positive Schur property the situation is quite different: Since AL-spaces have the positive Schur property and second duals of AL-spaces are AL-spaces, there are plenty of infinite dimensional Banach lattices $E$ for which $E^{**}$ has the positive Schur property. As far as we know, no characterization of such Banach lattices has appeared in the literature. Next we characterize such Banach lattices by means of second adjoints of operators on $E$ being almost Dunford-Pettis. This result shall be useful soon.


\begin{theorem}\label{fthe} 
The following are equivalent for a Banach lattice $E$. \\
{\rm (a)} For every bounded operator $T \colon E \longrightarrow \ell_\infty$, $T^{**}$ is almost Dunford-Pettis. \\
{\rm (b)} For every regular operator $T \colon E \longrightarrow \ell_\infty$, $T^{**}$ is almost Dunford-Pettis.\\
{\rm (c)} For every positive operator $T \colon E \longrightarrow \ell_\infty$, $T^{**}$ is almost Dunford-Pettis.\\
{\rm (d)} $E^{**}$ has the positive Schur property.
\end{theorem}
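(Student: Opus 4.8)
The plan is to close the cycle $(d) \Rightarrow (a) \Rightarrow (b) \Rightarrow (c) \Rightarrow (d)$. The implication $(d) \Rightarrow (a)$ is exactly the observation recorded just before the theorem: if $E^{**}$ has the positive Schur property, then every positive disjoint weakly null sequence in $E^{**}$ is norm null, so its image under any bounded $T^{**}$ is norm null. The implications $(a) \Rightarrow (b) \Rightarrow (c)$ are immediate from the inclusions between the relevant operator classes, since positive operators form a subclass of regular operators, which in turn form a subclass of bounded operators; a statement quantified over all bounded operators specializes to all regular ones, and then to all positive ones. Hence the entire content of the theorem is the implication $(c) \Rightarrow (d)$, which I would prove by contraposition and which I expect to be the main obstacle.

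So I would assume $E^{**}$ fails the positive Schur property. Then there is a positive disjoint weakly null sequence $(x_n^{**})$ in $E^{**}$ and $\varepsilon > 0$ with $\|x_n^{**}\| \geq \varepsilon$ for all $n$ (after passing to a subsequence). For each $n$, using $\|x_n^{**}\| \geq \varepsilon$ together with positivity of $x_n^{**}$, I would choose $\phi_n \in E^*$ with $\phi_n \geq 0$, $\|\phi_n\| \leq 1$ and $x_n^{**}(\phi_n) > \varepsilon/2$ (pick a norming functional and replace it by its modulus, using $\psi \le |\psi|$ and $\||\psi|\| = \|\psi\|$). These functionals assemble into a positive operator
\[
S \colon E \longrightarrow \ell_\infty, \qquad S(x) = (\phi_n(x))_{n=1}^\infty,
\]
which is bounded by $1$ because $\|\phi_n\| \le 1$, and positive because each $\phi_n$ is positive. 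Evaluating the adjoint on the coordinate functionals $e_n^* \in \ell_1 \subset (\ell_\infty)^*$ gives $S^* e_n^* = \phi_n$, whence
\[
\langle S^{**} x_n^{**}, e_n^* \rangle = \langle x_n^{**}, \phi_n \rangle > \varepsilon/2 .
\]
Since $\|e_n^*\|_{(\ell_\infty)^*} = 1$, this forces $\|S^{**} x_n^{**}\| > \varepsilon/2$ for every $n$.

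To land inside $F$ rather than in $\ell_\infty$, I would invoke the standing hypotheses on $F$: being Dedekind $\sigma$-complete and containing a copy of $\ell_\infty$, the lattice $F$ contains a \emph{lattice} copy of $\ell_\infty$, that is, there is a positive isomorphic embedding $i \colon \ell_\infty \longrightarrow F$. This step, upgrading an arbitrary isomorphic copy to a positive (lattice) one, is exactly where both hypotheses on $F$ are used, and I regard it as the crux of the argument; I would cite the appropriate classical result. Setting $T = i \circ S \colon E \longrightarrow F$, the operator $T$ is positive and bounded, and $T^{**} = i^{**} \circ S^{**}$. Because $i$ is bounded below, so is $i^{**}$, with the same constant $c > 0$, giving
\[
\|T^{**} x_n^{**}\| = \|i^{**} S^{**} x_n^{**}\| \geq c\, \|S^{**} x_n^{**}\| > c\,\varepsilon/2
\]
for all $n$. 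Thus $T^{**}$ sends the positive disjoint weakly null sequence $(x_n^{**})$ to a sequence bounded away from $0$, so $T^{**}$ is not almost Dunford-Pettis even though $T$ is positive. This contradicts $(c)$ and completes the contrapositive. Apart from the existence of the positive embedding $i$, everything reduces to a direct adjoint computation.
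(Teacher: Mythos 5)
Your proof is correct, and the reduction to (c)$\Rightarrow$(d) plus the choice of norming positive functionals $\phi_n$ match the paper exactly; where you diverge is in how the positive operator into $F$ is built. You factor it as $T=i\circ S$ with $S\colon E\to\ell_\infty$, $S(x)=(\phi_n(x))_n$, and a lattice embedding $i\colon\ell_\infty\to F$, outsourcing the crux to the classical fact that a Dedekind $\sigma$-complete Banach lattice containing an isomorphic copy of $\ell_\infty$ contains a lattice copy of it --- this is legitimate and is precisely (the nontrivial half of) the theorem the paper itself cites for a weaker purpose, namely \cite[Theorem 4.56]{positiveoperators} (see also \cite[Theorem 2.4.12]{nieberg}); your remaining claim that $i^{**}$ is bounded below with the same constant is standard. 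The paper instead builds the operator by hand: it uses the failure of order continuity of the norm of $F$ to extract a disjoint normalized sequence $0\le y_n\le y$, defines $T_0(z)=o\mbox{-}\lim_m\sum_{n=1}^m z_n^*(z)y_n$ on $E^+$ using Dedekind $\sigma$-completeness to guarantee the order limit, and extends by Kantorovich's theorem; this inline construction is essentially a proof of the embedding you cite, composed with your $S$. So the two arguments are mathematically the same construction packaged differently: yours is more modular and shorter given the citation, while the paper's is self-contained and produces the explicit lower bound $T^{**}(z_m^{**})\ge\alpha J_F(y_m)$ without invoking properties of second adjoints of embeddings.
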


\begin{proof} It is enough to prove (c)  $\Rightarrow$ (d) because (d) $\Rightarrow$ (a) is easy and (a)  $\Rightarrow$ (b)  $\Rightarrow$ (c) are obvious.

The assumption is that $T^{**}$ is almost Dunford-Pettis for every positive $T \colon E \longrightarrow \ell_\infty$. Assume that $E^{\ast\ast}$ fails the positive Schur property. In this case there exists a positive disjoint weakly null non-norm null sequence $(z_{n}^{\ast\ast})_{n=1}^{\infty}$ em $E^{\ast\ast}$. By \cite[Proposition 2.1]{wic} we can suppose that this sequence is normalized. So, 
$$1=\|z_{n}^{\ast\ast}\|=\sup\{z_{n}^{\ast\ast}(z^{\ast}):0\leq z^{\ast}\in E^{\ast} \text{ and } \|z^{\ast}\|=1\} $$
for every $n\in\mathbb{N}$ (see \cite[p.\,182]{positiveoperators}). Therefore, there are $\alpha>0$ and a positive sequence $(z_{n}^{\ast})_{n=1}^{\infty}$ in $B_{E^{\ast}}$ such that $z_{n}^{\ast\ast}(z_{n}^{\ast})\geq \alpha$ for every $n\in \mathbb{N}$. Since the norm of $\ell_\infty$ is not order continuous, 
 \cite[Theorem 2.4.2]{nieberg} provides a vector $y\in \ell_\infty$  and a disjoint sequence  $(y_{n})_{n=1}^{\infty}$ in $\ell_\infty$ such that $0\leq y_{n}\leq y$ and $\|y_{n}\|=1$ for every $n\in\mathbb{N}$. Given a positive $z \in E$, for every natural number $m \in \mathbb{N}$,
$$0\leq s_{m}: =\sum_{n=1}^{m}z_{n}^{\ast}(z)y_{n}\leq \sum_{n=1}^{m}\|z_{n}^{\ast}\|\cdot \|z\|y_{n}=\|z\|\cdot \sum_{n=1}^{m}y_{n}=\|z\|\cdot \bigvee_{n=1}^{m}y_{n}\leq \|z\|y. $$
It follows that $0\leq s_{m}\uparrow\leq \|z\|y$. Since $\ell_\infty$ is Dedekind $\sigma$-complete, there exists $w\in \ell_\infty$ such that $s_{m}\uparrow w$. Now \cite[Theorem 1.14(iv)]{alibur} gives $w=o-\lim\limits_{m\rightarrow\infty}s_{m}$, proving that the sequence $(s_m)_{m=1}^\infty$ is order convergent, that is, the map
 $$T_0\colon E^{+}\longrightarrow \ell_\infty^{+}~,~ T_0(z)=o-\displaystyle\lim_{m\rightarrow\infty}\displaystyle\sum_{n=1}^{m}z_{n}^{\ast}(z)y_{n},$$ is well defined. From \cite[Theorem 1.14(ii)]{alibur} we know that $T_0$ is additive, hence $T_0$ admits a positive linear extension $T\colon E\longrightarrow \ell_\infty$ by Kantorovich's Theorem.  For all $n\in\mathbb{N}$, positive $z\in E$ and positive functional $y^{\ast}\in \ell_\infty^{\ast}$,  $T(z)=T_0(z)\geq z_{n}^{\ast}(z)y_{n}$ and $$T^{\ast}(y^{\ast})(z)=y^{\ast}(T(z))\geq y^{\ast}(z_{n}^{\ast}(z)y_{n})=y^{\ast}(y_{n})z_{n}^{\ast}(z),$$
which shows that $T^{\ast}(y^{\ast})\geq y^{\ast}(y_{n})z_{n}^{\ast}$. Moreover, for every positive $x^{\ast\ast}\in E^{\ast\ast}$,
$$T^{\ast\ast}(x^{\ast\ast})(y^{\ast})=x^{\ast\ast}(T^{\ast}(y^{\ast}))\geq x^{\ast\ast}(y^{\ast}(y_{n})z_{n}^{\ast})=x^{\ast\ast}(z_{n}^{\ast})y^{\ast}(y_{n})=x^{\ast\ast}(z_{n}^{\ast})J_{F}(y_{n})(y^{\ast}).$$
In particular,
$$ T^{\ast\ast}(z_{m}^{\ast\ast})\geq z_{m}^{\ast\ast}(z_{m}^{\ast})J_{F}(y_{m})\geq \alpha J_{F}(y_{m})\geq 0,$$
which implies
$$\|T^{\ast\ast}(z_{m}^{\ast\ast})\|\geq \alpha\|J_{F}(y_{m})\|=\alpha\|y_{m}\|=\alpha>0$$
for every $m\in\mathbb{N}$. Recalling that $(z_{n}^{\ast\ast})_{n=1}^{\infty}$ is weakly null we conclude that $T^{**}$ is not almost Dunford-Pettis. The proof is complete.
\end{proof}

\begin{remark}\label{rema}\rm  Using \cite[Theorem 4.56]{positiveoperators}, the proof above makes clear that, in items (a), (b) and (c) of the theorem, $\ell_\infty$ can be replaced by any Dedekind $\sigma$-complete Banach lattice containing a copy of $\ell_\infty$, in particular by any dual Banach lattice containing a copy of $\ell_\infty$.
\end{remark}


\section{Almost Dunford-Pettis adjoint operators}\label{s3}
It is easy to check that
$T^*$ almost Dunford-Pettis does not imply $T$ almost Dunford-Pettis in general, and that 
$T^{**}$ almost Dunford-Pettis always implies $T$ almost Dunford-Pettis. 
In this section we shall focus on the following questions: Given a positive operator $T \colon E \longrightarrow F$ between Banach lattices, is it true that: \\
(Q1) If $T$ is almost Dunford-Pettis, then so is its adjoint $T^* \colon F^* \longrightarrow E^*$?\\
(Q2) If $T$ is almost Dunford-Pettis, then so is its second adjoint $T^{**} \colon E^{**} \longrightarrow F^{**}$?

 Let us see that both questions have negative answers in general.

\begin{example}\rm For question (Q1), ${\rm id}_{\ell_1}$ is positive and almost Dunford-Pettis 
 but ${\rm id}_{\ell_1}^* = {\rm id}_{\ell_\infty}$ is not almost Dunford-Pettis because $\ell_\infty$ fails the positive Schur property \cite[p.\,82]{wnuk}.

 As to question (Q2), 
putting $ E = \left({\textstyle\bigoplus\limits_{n \in \mathbb{N}}} \ell_\infty^n \right)_1$, the identity on $E$ is positive and almost Dunford-Pettis because $E$ has the positive Schur property \cite[p.\,17]{wnuk}, whereas ${\rm id}_{E^{**}}= {\rm id}_E^{**}$ is not almost Dunford-Pettis because $E^{**}$ fails the positive Schur property \cite[Example 2.8]{jg}. There are two ways to give an example that is not an identity operator: (i) It is clear that the canonical embedding $J_E \colon E \longrightarrow E^{**}$ is almost Dunford-Pettis and it is easy to check that $J_E^{**}$ is not. (ii) Since $E^*$ contains a lattice copy of $\ell_1$ \cite[Example 2.8]{jg}, the norm of $E^{**}$ is not order continuous \cite[Theorem 2.4.14]{nieberg}. In particular, $E^{**}$ is a dual Banach lattice containing a copy of $\ell_\infty$ \cite[Theorem 4.56]{positiveoperators} and failing the positive Schur property. Bearing Remark \ref{rema} in mind, 
by Theorem \ref{fthe} there is a positive operator $T \colon E \longrightarrow E^{**}$, which is almost Dunford-Pettis because $E$ has the positive Schur property, such  that $T^{**}$  fails to be almost Dunford-Pettis.
\end{example}

The first purpose of this section is to extend the following two known results 
about question (Q1). 

\begin{theorem}\label{(i)} {\rm (Aqzzouz, Elbour and  Wickstead \cite[Theorem 5.1]{wic})} The following are equivalent for the Banach lattices $E$ and $F$.\\
{\rm (a)} If $T \colon E \longrightarrow F$  is positive and almost Dunford-Pettis, then so is $T^{\ast}$. \\
{\rm (b)} 
$E^{\ast}$ has order continuous norm or 
$F^{\ast}$ has the positive Schur property.
\end{theorem}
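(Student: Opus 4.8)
The plan is to establish (b)\,$\Rightarrow$\,(a) by treating the two cases in (b) separately, and to establish (a)\,$\Rightarrow$\,(b) by contraposition through an explicit construction. Throughout I would use the characterization that a positive operator between Banach lattices is almost Dunford-Pettis if and only if it sends positive disjoint weakly null sequences to norm null sequences; applied to $T^*$, this reduces the problem to controlling $\|T^*(y_n^*)\|$ for an arbitrary positive disjoint weakly null sequence $(y_n^*)$ in $F^*$, and here the identity $\|T^*(y_n^*)\| = \sup\{y_n^*(Tx) : x \in B_E,\, x \geq 0\}$ is the basic computational tool.

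The implication (b2)\,$\Rightarrow$\,(a) is immediate and does not even use that $T$ is almost Dunford-Pettis: if $F^*$ has the positive Schur property then $\|y_n^*\| \to 0$ for every positive disjoint weakly null $(y_n^*)$, whence $\|T^*(y_n^*)\| \leq \|T\|\,\|y_n^*\| \to 0$. The substantive case is (b1)\,$\Rightarrow$\,(a), where I would argue by contradiction: assuming $\|T^*(y_n^*)\| \geq \varepsilon$ along a subsequence, I would first note that $(T^*(y_n^*))$ is a positive weakly null sequence in $E^*$, since adjoints are weak-to-weak continuous. Because $E^*$ has order continuous norm, a weakly null sequence whose norms stay bounded away from $0$ admits, after passing to a subsequence, a disjoint sequence $(u_n)$ in $(E^*)^+$ with $u_n \leq T^*(y_{k_n}^*)$ and $\|u_n\| \geq \varepsilon/2$ (a gliding hump argument). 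Choosing near-maximizers $x_n \in B_E^+$ with $u_n(x_n) \geq \varepsilon/4$ and exploiting the disjointness of $(u_n)$, I would disjointify the $x_n$ into a disjoint sequence $(w_n)$ in $B_E^+$ on which $u_n(w_n)$ remains bounded below. Since $E^*$ has order continuous norm, the equivalences characterizing this property (see \cite[Theorem 2.4.14]{nieberg}) guarantee that the bounded disjoint sequence $(w_n)$ is weakly null, so the hypothesis that $T$ is almost Dunford-Pettis forces $\|Tw_n\| \to 0$; but $y_{k_n}^*(Tw_n) = (T^*y_{k_n}^*)(w_n) \geq u_n(w_n)$ stays bounded below while $\|y_{k_n}^*\|$ is bounded, a contradiction.

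For (a)\,$\Rightarrow$\,(b) I would prove the contrapositive, assuming that $E^*$ fails order continuity and $F^*$ fails the positive Schur property, and build a positive almost Dunford-Pettis $T\colon E \to F$ whose adjoint is not almost Dunford-Pettis, in the spirit of Theorem \ref{fthe}. From the failure of order continuity of $E^*$, \cite[Theorem 2.4.2]{nieberg} yields a positive $x^* \in E^*$ and a disjoint normalized sequence $(x_n^*)$ in the order interval $[0,x^*]$; set $g^* = \bigvee_n x_n^* \leq x^*$, which exists as $E^*$ is Dedekind complete. From the failure of the positive Schur property of $F^*$, I would take a normalized positive disjoint weakly null sequence $(y_n^*)$ in $F^*$ together with positive $v_n \in B_F$ satisfying $y_n^*(v_n) \geq \alpha$ for some $\alpha > 0$. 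Because $\sum_n x_n^*(x) \leq x^*(x) < \infty$ for each $x \in E^+$, the series $\sum_n x_n^*(x)\,v_n$ converges in norm in $F$, so $T(x) = \sum_n x_n^*(x)\,v_n$ defines a positive bounded operator $T\colon E \to F$ (on $E^+$ first, then extended by Kantorovich's theorem). For positive weakly null $(w_k)$ one has $\|Tw_k\| \leq \sum_n x_n^*(w_k) \leq g^*(w_k) \to 0$, so $T$ is almost Dunford-Pettis; on the other hand $T^*(y_m^*) = \sum_n y_m^*(v_n)\,x_n^* \geq \alpha x_m^*$ gives $\|T^*(y_m^*)\| \geq \alpha$, so $T^*$ is not almost Dunford-Pettis, completing the contrapositive.

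With (b2)\,$\Rightarrow$\,(a) routine and (a)\,$\Rightarrow$\,(b) reducing to the bookkeeping above, the main obstacle is the disjointification in (b1)\,$\Rightarrow$\,(a): producing from the near-maximizers $x_n$ a genuinely disjoint, hence (by order continuity of $E^*$) weakly null, sequence in $B_E^+$ on which $T$ remains bounded below. This is precisely where order continuity of $E^*$ enters in an essential way, and making the two gliding-hump selections—first in $E^*$, then in $E$—cooperate is the delicate point.
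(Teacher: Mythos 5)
First, a point of comparison: the paper does not prove Theorem \ref{(i)} at all --- it is quoted from \cite{wic} --- so your proposal can only be measured against the techniques the paper deploys for its analogous new results, namely Lemmas \ref{le1} and \ref{le4} and the proof of Theorem \ref{teo4}. Measured that way, your architecture is the right one and matches the paper's closely: (b2)$\Rightarrow$(a) is the trivial estimate; (a)$\Rightarrow$(b) by contraposition builds exactly the operator $T(x)=\sum_n x_n^*(x)v_n$, which is the composite $R\circ S$ through $\ell_1$ used in the paper's proof of Theorem \ref{teo4}(a)$\Rightarrow$(b), with the failure of the positive Schur property of $F^*$ (weakly null sequences) correctly substituted for the failure of the dual positive Schur property of $F$ (weak$^*$-null sequences); the verification that $T$ is almost Dunford--Pettis and that $\|T^*(y_m^*)\|\geq\alpha$ is sound.

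The soft spot is (b1)$\Rightarrow$(a), and it is exactly the step you yourself flag as ``the delicate point'' without executing it. Your two-stage scheme --- first extract a disjoint minorant sequence $(u_n)$ in $(E^*)^+$ below $(T^*(y_{k_n}^*))$, then disjointify the near-maximizers $x_n$ in $E$ ``exploiting the disjointness of $(u_n)$'' --- is both harder than necessary and not justified as stated: disjointness of the $(u_n)$ alone does not drive the positive-part disjointification of the $(x_n)$; what makes that construction work is that the functionals being tested are weak$^*$-null, so that $u_{n_m}\bigl(\sum_{k<m}x_{n_k}\bigr)$ can be made small along a subsequence. (It does hold that $(u_n)$ is weak$^*$-null, since $0\leq u_n\leq T^*(y_{k_n}^*)\to 0$ weakly, so your route can be repaired, but you must say this.) The cleaner path, and the one the paper's Lemma \ref{le1} encodes following de Pagter--Schep \cite{pagter}, is a single disjointification: $(T^*(y_n^*))_{n}$ is already a positive, seminormalized (along the bad subsequence), weak$^*$-null sequence in $E^*$, so Lemma \ref{le1} directly yields a positive disjoint bounded sequence $(x_m)$ in $E$ and indices $n_m$ with $T^*(y_{n_m}^*)(x_m)\geq \varepsilon(4^{-1}-2^{-m})$; order continuity of $E^*$ enters only to conclude via \cite[Theorem 2.4.14]{nieberg} that $(x_m)$ is weakly null, whence $\|T(x_m)\|\to 0$ because $T$ is almost Dunford--Pettis, contradicting $y_{n_m}^*(T(x_m))=T^*(y_{n_m}^*)(x_m)\geq \varepsilon/8$ eventually. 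Replacing your double gliding hump by this one-step argument closes the gap.
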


The following notions are well studied.\\
$\bullet$ A Banach lattice $E$ has the {\it dual positive Schur property} if positive weak$^*$-null sequences in $E^*$ are norm null (see \cite{wic}). \\
$\bullet$ A Banach lattice $E$ has {\it property (d)} if $(|x_n|)_{n=1}^\infty$ is weak$^*$-null for every disjoint weak$^*$-null sequence $(x_n)_{n=1}^\infty$ in $E^*$ (see \cite{lilian}).\\
$\bullet$ An operator $T$ from a Banach space $X$ to a Banach lattice $F$ is {\it almost limited} if $T^*$ sends disjoint weak$^*$-null sequences in $F^*$ to norm null sequences in $X^*$ (see \cite{elbour}).


\begin{theorem} \label{(ii)} {\rm (El Fahri and H'michane \cite[Theorem 4.3]{kamal})} Let $E$ be a Banach lattice and let $F$ be a Banach lattice with property (d). The following are equivalent.\\
{\rm (a)} If $T \colon E \longrightarrow F$  is positive and almost Dunford-Pettis, then $T$ is almost limited, hence $T^{\ast}$ is almost Dunford-Pettis. \\
{\rm (b)} 
$E^{\ast}$ has order continuous norm or 
$F$ has the dual positive Schur property.
\end{theorem}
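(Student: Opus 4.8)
The plan is to prove the equivalence by establishing (b) $\Rightarrow$ (a) together with the contrapositive of (a) $\Rightarrow$ (b), splitting (b) into its two alternatives. I will use throughout the elementary observation that if $(y_n^*)$ is a positive weak$^*$-null sequence in $F^*$ and $T \geq 0$, then $(T^* y_n^*)$ is a positive weak$^*$-null sequence in $E^*$, since $(T^* y_n^*)(x) = y_n^*(Tx) \to 0$ for each $x \in E$.

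For (b) $\Rightarrow$ (a), the case (b2) is immediate and requires nothing lattice-theoretic: every positive weak$^*$-null sequence $(y_n^*)$ in $F^*$ is already norm null by the dual positive Schur property, whence $\|T^* y_n^*\| \leq \|T^*\|\,\|y_n^*\| \to 0$, so every bounded $T$ is positively limited. The substantive case is (b1). Assuming $E^*$ has order continuous norm, I would take $(y_n^*)$ positive weak$^*$-null in $F^*$, set $u_n^* = T^* y_n^* \geq 0$, and suppose for contradiction that $\|u_n^*\| \not\to 0$; passing to a subsequence I may assume $\|u_n^*\| \geq \varepsilon$, and since $u_n^*$ is positive I may pick $x_n \in B_E^+$ with $u_n^*(x_n) = y_n^*(T x_n) \geq \varepsilon/2$ (using $\|u_n^*\| = \sup\{u_n^*(x): 0 \leq x \in B_E\}$). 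The plan is then to extract, from the bounded positive sequence $(x_n)$ tested against the weak$^*$-null functionals $(u_n^*)$, a disjoint positive sequence $(w_k)$ in $E$ with $u_{n_k}^*(w_k) \geq \delta > 0$. Because $E^*$ has order continuous norm, every norm-bounded disjoint sequence in $E$ is weakly null (a standard consequence of \cite{nieberg}; it fails for $L_1$, whose dual is not order continuous), so $(w_k)$ is positive, disjoint and weakly null, and $\|T w_k\| \geq u_{n_k}^*(w_k)/\|y_{n_k}^*\| \geq \delta/C > 0$ (the norms $\|y_n^*\|$ being bounded), contradicting that $T$ is almost Dunford-Pettis.

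For (a) $\Rightarrow$ (b), I would argue by contraposition: assuming neither (b1) nor (b2) holds, I construct a positive almost Dunford-Pettis operator that is not positively limited. Since $E^*$ lacks order continuous norm, \cite[Theorem 2.4.2]{nieberg} yields $\phi \in (E^*)^+$ and a positive disjoint sequence $(x_n^*)$ in $E^*$ with $0 \leq x_n^* \leq \phi$ and $\|x_n^*\| = 1$. Since $F$ fails the dual positive Schur property, there is a normalized positive weak$^*$-null sequence $(y_n^*)$ in $F^*$, and from $\|y_n^*\| = \sup\{y_n^*(y): 0 \leq y \in B_F\}$ I pick $y_n \in B_F^+$ and $\alpha > 0$ with $y_n^*(y_n) \geq \alpha$. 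I would define $T$ on the positive cone by $T(x) = \sum_{n} x_n^*(x) y_n$: this converges for $x \geq 0$ because disjointness gives $\sum_{n=1}^m x_n^*(x) = \bigl(\bigvee_{n=1}^m x_n^*\bigr)(x) \leq \phi(x)$ while $\|y_n\| \leq 1$, and additivity on $E^+$ plus Kantorovich's theorem extend $T$ to a positive bounded operator. For positive weakly null $(z_k)$ one has $\|T z_k\| \leq \sum_n x_n^*(z_k)\|y_n\| \leq \phi(z_k) \to 0$, so $T$ is almost Dunford-Pettis. On the other hand every coefficient $y_m^*(y_n)$ is nonnegative, so on $E^+$ we get $T^* y_m^* \geq y_m^*(y_m) x_m^* \geq \alpha x_m^* \geq 0$, and monotonicity of the dual norm gives $\|T^* y_m^*\| \geq \alpha\|x_m^*\| = \alpha$ for all $m$; as $(y_m^*)$ is positive weak$^*$-null, $T$ is not positively limited.

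The main obstacle is the disjointification in case (b1): converting the bounded test vectors $(x_n)$ into a disjoint, hence (by order continuity of $E^*$) weakly null, positive sequence on which $T$ remains bounded below. The naive choice $w_k \leq x_{n_k}$ can fail when $E^*$ is not order continuous (the $L_1$ computation makes this explicit), so the construction must exploit simultaneously the order continuity of $E^*$ and the weak$^*$-nullity of $(u_n^*)$; it is precisely here that property (d) of $F$ enters, serving to reduce a general positive weak$^*$-null sequence in $F^*$ to its disjoint behaviour, where property (d) forces the disjoint part to be norm null and order continuity of $E^*$ together with the almost Dunford-Pettis property dispose of the remainder. Making this reduction rigorous, rather than the two endpoint cases, is where the real work lies.
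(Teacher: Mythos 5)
Your overall architecture is sound, and two of the three pieces coincide with what the paper actually does: the (b2) case of (b)\,$\Rightarrow$\,(a) is the same one-line estimate, and your (a)\,$\Rightarrow$\,(b) construction $T(x)=\sum_n x_n^*(x)y_n$ --- with $(x_n^*)$ disjoint, normalized and dominated by $\phi$ coming from the failure of order continuity of $E^*$, and $(y_n^*),(y_n)$ witnessing the failure of the dual positive Schur property --- is exactly the operator built in the proof of Theorem~\ref{teo4}, down to the final estimate $\|T^*y_m^*\|\ge y_m^*(y_m)x_m^*(x_m)$. (The paper factors it as $R\circ S$ through $\ell_1$ and uses the positive Schur property of $\ell_1$; your direct estimate $\|Tz_k\|\le\phi(z_k)\to0$ is an acceptable variant.) The problem is the (b1) case, where there is a genuine gap that you yourself flag: you never produce the positive disjoint bounded sequence $(w_k)$ with $u_{n_k}^*(w_k)\ge\delta>0$, and that extraction is the entire content of the hard direction. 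The paper isolates it as Lemma~\ref{le1} (a de Pagter--Schep/Dodds--Fremlin disjointification): from $x_n\in B_E^+$ with $u_n^*(x_n)\ge\|u_n^*\|/2$ and $(u_n^*)$ positive weak$^*$-null, one chooses inductively $n_1<n_2<\cdots$ with $u_{n_m}^*\bigl(\sum_{k=1}^{m-1}x_{n_k}\bigr)\le 2^{-2m-2}\|u_{n_m}^*\|$ and sets
$$w_m=\left(x_{n_m}-4^m\sum_{k=1}^{m-1}x_{n_k}-2^{-m}\sum_{k=1}^{\infty}2^{-k}x_{n_k}\right)^+,$$
which is positive, disjoint, satisfies $0\le w_m\le x_{n_m}$, and obeys $u_{n_m}^*(w_m)\ge\|u_{n_m}^*\|(4^{-1}-2^{-m})$. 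Without this (or an equivalent device, packaged in the paper as Lemma~\ref{le4}) the implication (b1)\,$\Rightarrow$\,(a) is not proved.

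Moreover, your closing speculation about how to fill the gap points in the wrong direction. You suggest that property (d) of $F$ is the missing ingredient, entering through a decomposition of the positive weak$^*$-null sequence in $F^*$ into a disjoint part and a remainder. In fact property (d) plays no role whatsoever in this step: the disjointification is performed entirely on the $E$ side, applied to the positive weak$^*$-null sequence $(T^*y_n^*)$ in $E^*$, and uses only positivity, weak$^*$-nullity and seminormalization of that sequence; order continuity of $E^*$ enters only afterwards, to conclude that the disjoint bounded sequence $(w_m)$ is weakly null so that the almost Dunford--Pettis property of $T$ can be applied. This is precisely why Theorem~\ref{teo4} of the paper establishes the same equivalence with the property (d) hypothesis deleted. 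The fix is therefore not to decompose $(y_n^*)$ in $F^*$ but to prove, and then invoke, the disjointification lemma for positive seminormalized weak$^*$-null sequences in $E^*$.
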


The result \cite[Theorem 2.7]{nabil} is also related to Question (Q1), but it uses hypothesis we shall not deal with, so our results are not comparable to this one.

It is also known that sometimes a property stronger than being almost Dunford-Pettis holds for the adjoint of a positive almost Dunford-Pettis operator. 
Recalling that an operator between Banach spaces is {\it limited} if its adjoint sends weak$^*$-null sequences to norm null sequences (see Diestel and Bourgain \cite{diestel}), we define:
\begin{definition} \rm An operator $T$ from a Banach space $X$ to a Banach lattice $F$ is {\it positively limited} if $T^*$ takes positive weak$^*$-null sequences in $F^*$ to norm null sequences in $X^*$.
\end{definition}
It is clear that if $T$ is positively limited, then  $T^*$ is almost Dunford-Pettis. The converse fails: ${\rm id}_{c_0}^* = {\rm id}_{\ell_1}$ is almost-Dunford-Pettis but ${\rm id}_{c_0}$ is not positively limited.

For operators from Banach lattices to Banach spaces, the class of Dunford-Pettis operators is obviously contained in the class of almost Dunford-Pettis operators. To see that it is strictly smaller, just consider the identity operator on any Banach lattice having the positive Schur property and failing the Schur property, for instance, $L_1[0,1]$.



Our first result about Question (Q1) reads as follows.

\begin{theorem}\label{novot1} Let $E$ and $F$ be Banach lattices.\\
{\rm (a)} If  the adjoint of every positive Dunford-Pettis operator 
$T\colon E\longrightarrow F$ is almost Dunford-Pettis, then $E^{\ast}$ has order continuous norm or $F^{\ast}$ has the positive Schur property.\\
{\rm (b)} If every positive Dunford-Pettis operator $T\colon E\longrightarrow F$ is positively limited, the $E^{\ast}$ has order continuous norm or $F$ has the dual positive Schur property. 
\end{theorem}

\begin{proof} {\rm (a)} Suppose that the  norm  of $E^{\ast}$ is not order continuous and that $F^*$ fails the positive Schur property. The order discontinuity of the norm of $E^{\ast}$ gives a disjoint positive sequence  $(x_{n}^{\ast})_{n=1}^{\infty}$ in $E^{\ast}$ and a functional $x^{\ast}\in E^{\ast}$ so that $\|x_{n}^{\ast}\|=1$ and $0\leq x_{n}^{\ast}\leq x^{\ast}$ for every $n\in\mathbb{N}$ (see \cite[Theorem 2.4.2]{nieberg}). For each $n\in\mathbb{N}$ we can take a positive $x_{n} \in E$ such that $\|x_{n}\|=1$ and $x_{n}^{\ast}(x_{n})\geq \frac{1}{2}$. For every $m \in \mathbb{N}$ and any $x \in E$,
\begin{equation}\label{novoeq1}
\sum_{n=1}^{m}|x_{n}^{\ast}(x)|\leq \sum_{n=1}^{m}x_{n}^{\ast}(|x|)=\Big(\bigvee_{n=1}^{m}x_{n}^{\ast}\Big)(|x|)\leq x^{\ast}(|x|)<\infty,
\end{equation}
showing that the sequence $(x_{n}^{\ast}(x))_{n=1}^{\infty}$ is absolutely summable. Therefore, the map
$$S\colon E\longrightarrow \ell_{1}~,~S(x)=(x_{n}^{\ast}(x))_{n=1}^{\infty},$$
is a well defined positive Dunford-Pettis operator because $\ell_1$ has the Schur property. Using now that $F^{\ast}$ fails the positive Schur property, there is a disjoint positive normalized weakly null sequence $(y_{n}^{\ast})_{n=1}^{\infty}$ in $F^{\ast}$, and in this case we can take a positive normalized sequence
$(y_{n})_{n=1}^{\infty}$ in $F$ such that
$y_{n}^{\ast}(y_{n})\geq \frac{1}{2}$ for every $n\in\mathbb{N}$. It is immediate that
$$R\colon \ell_{1}\longrightarrow F ~,~R((z_{n})_{n=1}^{\infty})=\displaystyle\sum_{n=1}^{\infty}z_{n}y_{n},$$
 is a well defined positive linear operator. Define $T: =R\circ S\colon E\longrightarrow F,~ T(x)=\sum\limits_{n=1}^{\infty}x_{n}^{\ast}(x)y_{n}$. As $S$ is Dunford-Pettis, so is $T$. 

 The proof shall be complete once we prove that $T^{\ast}$ is not almost Dunford-Pettis. To do so, note that  $T(x)\geq x_{n}^{\ast}(x)y_{n}$ for all  $n\in\mathbb{N}$ and positive $x\in E$. Hence, for every positive  $y^{\ast}\in F^{\ast}$,
 $$T^{\ast}(y^{\ast})(x)=y^{\ast}(T(x))\geq y^{\ast}(x_{n}^{\ast}(x)y_{n})=y^{\ast}(y_{n})x_{n}^{\ast}(x).$$ In particular,  $T^{\ast}(y_{k}^{\ast})(x_{k})\geq y_{k}^{\ast}(y_{k})x_{k}^{\ast}(x_{k})$, hence $$\|T^{\ast}(y_{k}^{\ast})\|\geq T^{\ast}(y_{k}^{\ast})(x_{k})\geq y_{k}^{\ast}(y_{k})x_{k}^{\ast}(x_{k})\geq \frac{1}{4}$$
 for every $k\in\mathbb{N}$. This shows that $T^*$ is not almost Dunford-Pettis.

To prove (b) it is enough to replace, in the reasoning above, the sequence $(y_{n}^{\ast})_{n=1}^{\infty}$ in $F^{\ast}$ with a positive normalized weak$^{\ast}$-null sequence. \end{proof}

To prove our second theorem about Question (Q1) we need the following consequence of  \cite[Corollary 2.7]{dofre}.

\begin{lemma}\label{le4} A positive operator $T \colon E \longrightarrow F$ between Banach lattices is positively limited if and only if
$T^{\ast}(y_{n}^{\ast})(x_{n})\longrightarrow 0$ for every positive weak$^*$-null sequence $(y_{n}^{\ast})_{n=1}^{\infty}$ in $F^{\ast}$ and every positive disjoint bounded sequence $(x_{n})_{n=1}^{\infty}$ in $E$.
\end{lemma}

\begin{theorem}\label{novot2} Let $E, F$ be Banach lattices and let $X$ be a Banach space.\\
{\rm (a)} If  $E^{\ast}$ has order continuous norm, then every positive almost Dunford-Pettis operator $T\colon E\longrightarrow F$ is positively limited, hence  $T^{\ast}$ is almost Dunford-Pettis.\\
{\rm (b)} If  $F$ has the dual positive Schur property, then every continuous operator $T\colon X\longrightarrow F$ is positively limited, hence $T^{\ast}$ is almost Dunford-Pettis.
\end{theorem}

\begin{proof}   Let $(y_{n}^{\ast})_{n=1}^{\infty}$ be a positive weak$^*$-null sequence in  $F^{\ast}$.\\
 {\rm (a)} Let $T$ be a positive almost Dunford-Pettis operator and let $(x_{n})_{n=1}^{\infty}$ be a positive disjoint bounded sequence in $E$. Since $E^{\ast}$ has order continuous norm, by \cite[Theorem 2.4.14]{nieberg} we know that $(x_{n})_{n=1}^{\infty}$ is weakly null. So,  $(T(x_{n}))_{n=1}^\infty$ is norm null because $T$ is almost Dunford-Pettis. From
$$0\leq T^{\ast}(y_{n}^{\ast})(x_{n})=y_{n}^{\ast}(T(x_{n}))\leq \|y_{n}^{\ast}\| \cdot \|T(x_{n})\|\leq \displaystyle\sup_{k} \|y_{k}^{\ast}\|\cdot \|T(x_{n})\|\longrightarrow 0,$$
it follows that $T^{\ast}(y_{n}^{\ast})(x_{n})\longrightarrow 0$. Lemma \ref{le4} gives that $T$ is positively limited.\\
{\rm (b)} Supposing that $F$ has the dual positive Schur property and that $T$ is a continuous operator, we have $\| T^{\ast}(y_{n}^{\ast})\|\leq \|T^{\ast}\|\cdot \|y_{n}^{\ast}\|\longrightarrow 0$, which implies that $T$ is positively limited.
\end{proof}

To compare our theorems with the known results, we need the following lemma, which will be helpful later again. 

\begin{lemma}\label{novolem1}
Let $T \colon E \longrightarrow F$ be a positive operator between Banach lattices.\\ 
{\rm (a)} If $T$ is almost limited, then $T$ is positively limited.\\
{\rm (b)} If $F$ has property (d) and $T$ is positively limited, then $T$ is almost limited.
\end{lemma}
\begin{proof}
{\rm (a)} Suppose that $T$ is not positively limited. In this case, there is a positive weak$^*$-null sequence $(y_{n}^{\ast})_{n=1}^{\infty}$ in $F^{\ast}$ such that $(T^{\ast}(y_{n}^{\ast}))_{n=1}^\infty$ is not norm null.
We can take a subsequence $(y_{n_{j}}^{\ast})_{j=1}^{\infty}$ and $M > 0$ such that  $\|T^{\ast}(y_{n_{j}}^{\ast})\|\geq 2M>0$ for every $j\in\mathbb{N}$. Calling $w_{j}^{\ast}=y_{n_{j}}^{\ast}$ and $z_{j}^{\ast}=\frac{T^{\ast}(w_{j}^{\ast})}{\|T^{\ast}(w_{j}^{\ast})\|}$, each    $z_{j}^{\ast}$ is positive and, from
$$1=\|z_{j}^{\ast}\|=\sup\{z_{j}^{\ast}(z): z\in E^{+} \text{ and } \|z\|=1\}$$
\cite[p.\,182]{positiveoperators}, there exists a positive sequence  $(z_{j})_{j=1}^{\infty}$ in $E$ such that $\|z_{j}\|=1$ and $z_{j}^{\ast}(z_{j})\geq \frac{1}{2}$ for every $j\in\mathbb{N}$, that is, $w_{j}^{\ast}(T(z_{j}))\geq M$.
It is plain that $(w_{j}^{\ast})_{j=1}^\infty$ is weak$^*$-null. Setting $j_{1}=1$, we have $w_{j}^{\ast}(T(z_{j_{1}}))\longrightarrow 0$, so there exists  $j_{2}>j_{1}$ such that $w_{j_{2}}^{\ast}(T(z_{j_{1}}))<\frac{M}{2^{2\times 2+2}}$. The convergence $w_{j}^{\ast}(T(z_{j_{1}}+z_{j_{2}}))\longrightarrow 0$ provides $j_{3}>j_{2}$ such that $w_{j_{3}}^{\ast}(T(z_{j_{1}}+z_{j_{2}}))<\frac{M}{2^{2\times 3+2}}$. Inductively we construct a subsequence $(j_{m})_{m=1}^{\infty} \subseteq \mathbb{N}$ such that $w_{j_{m}}^{\ast}\left(\sum\limits_{k=1}^{m-1}T(z_{j_{k}})\right)\leq \frac{M}{2^{2m+2}}$ for every $m\geq 2$. Now we define
$$x_{m}=\left(T(z_{j_{m}})-4^{m}\cdot\displaystyle\sum_{k=1}^{m-1}T(z_{j_{k}})-2^{-m}\cdot\sum_{k=1}^{\infty}2^{-k}T(z_{j_{k}})\right)^{+} \text{ for each } m\geq 2.$$
Taking $x_{1}=0$, the sequence $(x_{m})_{m=1}^{\infty}$ is positive disjoint in $F$ by \cite[Lemma 2.6]{retbi} or, alternatively, by the proof of \cite[Lemma 2.4]{pagter}. From $0\leq x_{m}\leq T(z_{j_{m}})$ we get $\|x_{m}\|\leq \|T(z_{j_{m}})\|\leq \|T\|$ for every $m\in\mathbb{N}$. Taking $N>0$ so that $\|w_{j_{m}}^{\ast}\|\leq N$, for every $m\geq 2$ we have
 \begin{align}
w_{j_{m}}^{\ast}(x_{m})&=w_{j_{m}}^{\ast}\left(T(z_{j_{m}})-4^{m}\cdot\displaystyle\sum_{k=1}^{m-1}T(z_{j_{k}})-2^{-m}\cdot\sum_{k=1}^{\infty}2^{-k}T(z_{j_{k}})\right)^{+}\nonumber\\
&\geq w_{j_{m}}^{\ast}\left(T(z_{j_{m}})-4^{m}\cdot\displaystyle\sum_{k=1}^{m-1}T(z_{j_{k}})-2^{-m}\cdot\sum_{k=1}^{\infty}2^{-k}T(z_{j_{k}})\right)\nonumber\\
&=w_{j_{m}}^{\ast}(T(z_{j_{m}}))-4^{m}w_{j_{m}}^{\ast}\left(\displaystyle\sum_{k=1}^{m-1}T(z_{j_{k}})\right)-2^{-m}w_{j_{m}}^{\ast}\left(\sum_{k=1}^{\infty}2^{-k}T(z_{j_{k}})\right)\nonumber\\
&\geq M-\frac{M4^{m}}{2^{2m+2}}-\frac{N\|T\|}{2^{m}}=M(1-\frac{1}{4})-\frac{N\|T\|}{2^{m}}=\frac{3M}{4}-\frac{N\|T\|}{2^{m}}.\label{yn3z}
\end{align}
Since the sequence $(x_{m})_{m=1}^{\infty}$ is disjoint, by \cite[Exercise 22. p.\,77]{positiveoperators} there exists a disjoint positive sequence  $(u_{m}^{\ast})_{m=1}^{\infty}$ in $F^{\ast}$ such that $0\leq u_{m}^{\ast}\leq w_{j_{m}}^{\ast}$ and $u_{m}^{\ast}(x_{m})= w_{j_{m}}^{\ast}(x_{m})$ for every $m\in\mathbb{N}$. Since $(w_{j_{m}}^{\ast})_{n=1}^\infty$ is weak$^*$-null, so is  $(u_{m}^{\ast})_{j=1}^\infty$, and $\|T^{\ast}(u_{m}^{\ast})\|\longrightarrow 0$ because $T$ is almost limited. From (\ref{yn3z}) it follows that
$$\frac{3M}{4}-\frac{N\|T\|}{2^{m}}\leq w_{j_{m}}^{\ast}(x_{m})=u_{m}^{\ast}(x_{m})\leq u_{m}^{\ast}(T(z_{j_{m}}))=T^{\ast}(u_{m}^{\ast})(z_{j_{m}})\leq \|T^{\ast}(u_{m}^{\ast})\|$$
for every $m$. Making $m\rightarrow\infty$ we get $M\leq 0$, a contradiction which proves that $T$ is positively limited.

Statement {\rm (b)} follows from \cite[Theorem 3]{elbour}.
\end{proof}

Now we are ready to compare our Theorems \ref{novot1} and \ref{novot2} to Theorems  \ref{(i)} and  \ref{(ii)}.\\
$\bullet$ Since the class of Dunford-Pettis operators is strictly smaller than the class of almost Dunford-Pettis operators,  Theorem \ref{novot1}(a) obtains, with a weaker assumption, the same conclusion of Theorem \ref{(i)}(a)$\Rightarrow$(b). 
\\
$\bullet$ For the same reason, Theorem \ref{novot1}(b) obtains, with a weaker assumption, the same conclusion of Theorem \ref{(ii)}(a)$\Rightarrow$(b).\\
$\bullet$ Theorem  \ref{novot2} obtains the same conclusion of Theorem \ref{(ii)}(b)$\Rightarrow$(a) without asking $F$ to have propriedade (d). In this case, if $F$ has property (d), then $T$ is almost limited by Lemma \ref{novolem1}.






\medskip

Turning our attention to Question (Q2), to the best of our knowledge, all that is known is the iteration of the known results about Question (Q1), namely Theorems \ref{(i)} and  \ref{(ii)}. Theorem \ref{(i)} can be iterated with itself in four different ways, but the only interesting outcome is the following:

\begin{corollary}\label{1cor1} Let $E$ and $F$ be Banach lattices such that $E^*$ and $F^{**}$ have order continuous norms. Then $T^{**}$ is almost Dunford-Pettis whenever $T \colon E \longrightarrow F$ is positive and almost Dunford-Pettis.
\end{corollary}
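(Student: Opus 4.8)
The plan is to apply Theorem \ref{(i)} twice, once to $T$ and once to its adjoint, exploiting the two order continuity hypotheses one at a time. First I would apply Theorem \ref{(i)} to the operator $T \colon E \longrightarrow F$ itself. By hypothesis $E^*$ has order continuous norm, so condition (b1) of Theorem \ref{(i)} is met for this pair of lattices; the implication (b1)$\Rightarrow$(a) therefore guarantees that the positive almost Dunford-Pettis operator $T$ has an adjoint $T^* \colon F^* \longrightarrow E^*$ that is again almost Dunford-Pettis.

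The second step rests on a small but essential observation: since $T$ is positive, its adjoint $T^*$ is positive as well. Thus $T^*$ is a positive almost Dunford-Pettis operator, and I would now reapply Theorem \ref{(i)} to $T^* \colon F^* \longrightarrow E^*$, with $F^*$ playing the role of the domain lattice and $E^*$ that of the target lattice. For this application, condition (b1) reads ``$(F^*)^* = F^{**}$ has order continuous norm,'' which is precisely the second hypothesis of the corollary. Hence the implication (b1)$\Rightarrow$(a) yields that $(T^*)^* = T^{**} \colon E^{**} \longrightarrow F^{**}$ is almost Dunford-Pettis, which is the desired conclusion.

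Since the argument is a direct iteration of the already-established Theorem \ref{(i)}, there is no genuine analytic obstacle here. The only points requiring care are purely bookkeeping: tracking which lattice plays the role of domain and which of target when the theorem is reapplied to $T^*$ (so that the correct dual, $F^{**}$, is the one to which order continuity is imposed), together with the elementary fact that positivity is inherited by the adjoint of a positive operator. The restriction to hypotheses (i) among the four possibilities listed before the corollary is deliberate, as the cases involving the positive Schur property of $E^{**}$ are noted to be trivial and the remaining case is of unknown realizability.
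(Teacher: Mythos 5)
Your proposal is correct and is precisely the argument the paper intends: the corollary is presented there as the iteration of Theorem \ref{(i)}, applying (b1)$\Rightarrow$(a) first to $T$ (using order continuity of $E^*$) and then to the positive operator $T^*\colon F^*\longrightarrow E^*$ (using order continuity of $(F^*)^*=F^{**}$). Your explicit remark that positivity passes to the adjoint is exactly the bookkeeping point that makes the second application legitimate.
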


Indeed, in two of the other iterations, $E^{**}$ has the positive Schur property, therefore the result is trivial in this case. In the remaining iteration, $F^*$ has  the positive Schur property and $F^{**}$ has order continuous norm. Let us see that there exists no such $F$: the positive Schur property of $F^*$ yields that $F^*$ contains a lattice copy of $\ell_1$, and in this case the norm of $F^{**}$ is not order continuous \cite[Theorem 2.4.14] {nieberg}.

Now we study the iteration of Theorem \ref{(ii)} with itself. Let an operator $T$ between Banach lattices be given. Since Dedekind $\sigma$-complete Banach lattices have property (d) and $T^*$  is defined on a Dedekind complete Banach lattice, from Lemma \ref{novolem1} we know that $T^*$ is almost limited if and only if it is positively limited.

Again, although Theorem \ref{(ii)} can be iterated with itself in four different ways, the only interesting outcome is the following:

\begin{corollary}\label{(iv)} Let $E$ and $F$ be Banach lattices such that $F$ has property (d) and $E^{\ast}$ and $F^{\ast\ast}$ have order continuous norms. If $T \colon E \longrightarrow F$ is positive and almost Dunford-Pettis, then $T^{\ast}$ is positively limited, that is, $T^*$ is almost limited, hence $T^{**}$ is almost Dunford-Pettis.
\end{corollary}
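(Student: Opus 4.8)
The plan is to obtain the conclusion by applying Theorem \ref{(ii)} twice: once to $T$ itself and once to the adjoint operator $T^* \colon F^* \longrightarrow E^*$. The paper has already framed the corollary as ``the iteration of Theorem \ref{(ii)} with itself'', so the task is essentially to verify that the hypotheses of Theorem \ref{(ii)} are met at each stage, being careful about which Banach lattice plays the role of domain and which plays the role of codomain.

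First I would carry out the application to $T$. The standing hypothesis of Theorem \ref{(ii)}, namely that the codomain have property (d), is satisfied since $F$ has property (d); condition (b1) holds because $E^*$ has order continuous norm; and $T$ is positive and almost Dunford-Pettis by assumption. Theorem \ref{(ii)} then gives that $T$ is positively limited and, in particular, that $T^* \colon F^* \longrightarrow E^*$ is almost Dunford-Pettis. Next I would set up the second application, this time to $T^*$, where the roles switch: $F^*$ becomes the domain and $E^*$ the codomain. Here $T^*$ is positive (being the adjoint of a positive operator) and almost Dunford-Pettis (by the first step); the codomain $E^*$, being a dual Banach lattice, is Dedekind complete, hence Dedekind $\sigma$-complete, hence has property (d), which is exactly the standing hypothesis needed; and condition (b1) for this second application is that the dual of the domain, namely $(F^*)^* = F^{**}$, have order continuous norm, which is precisely one of our assumptions. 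Theorem \ref{(ii)} therefore yields that $T^*$ is positively limited, and hence that $(T^*)^* = T^{**}$ is almost Dunford-Pettis. To record the clause ``that is, $T^*$ is almost limited'', I would invoke the equivalence noted just before the statement: since $T^*$ is defined on the Dedekind complete lattice $F^*$, by \cite[Theorem 3]{elbour} the operator $T^*$ is positively limited if and only if it is almost limited.

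This argument involves no new idea beyond a careful bookkeeping of the roles of the lattices, so the only genuine source of error — and the step I would treat most carefully — is matching the order-continuity hypothesis to the correct instance of condition (b1) in the second application. It is the bidual $F^{**}$ (equivalently, the dual of the domain $F^*$ of $T^*$), and not $E^{**}$, whose order continuity drives the second application; a careless reading could easily invoke the wrong dual. The companion small check that must not be skipped is that $E^*$ inherits property (d) from being a dual lattice, since property (d) of the codomain is exactly what legitimizes applying Theorem \ref{(ii)} to $T^*$ in the first place.
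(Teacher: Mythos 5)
Your proposal is correct and is exactly the paper's route: the corollary is obtained by iterating Theorem \ref{(ii)}, first on $T\colon E\longrightarrow F$ (using property (d) of $F$ and order continuity of the norm of $E^*$) and then on $T^*\colon F^*\longrightarrow E^*$ (using that the dual lattice $E^*$ is Dedekind complete, hence has property (d), and that $(F^*)^*=F^{**}$ has order continuous norm), with the final identification of ``positively limited'' and ``almost limited'' for $T^*$ via \cite[Theorem 3]{elbour}. Your bookkeeping of which lattice plays the role of domain and codomain at each stage matches the paper's intended argument.
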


Indeed, in two of the other iterations, $E^*$ has the dual positive Schur property, therefore the result is trivial in this case. In the remaining iteration, $F$ has the dual positive Schur property and $F^{**}$ has order continuous norm. Again, there is no such $F$ because the dual positive Schur property of $F$ implies that $F^*$ has the positive Schur property and, as mentioned above, there is no such a Banach lattice.

%
%

The applications of Theorem \ref{(i)} first and then Theorem \ref{(ii)} and vice-versa shall be discussed in due time. Our result about Question (Q2) reads as follows.

\begin{theorem} \label{teo2} Let $E, F$ be Banach lattices such that $E^{\ast}$ and $F$ haver order continuous norms.  If $T \colon E \longrightarrow F$ is positive and almost Dunford-Pettis, then $T^{\ast}$ is positively limited, that is, $T^*$ is almost limited, hence $T^{**}$ is almost Dunford-Pettis.
\end{theorem}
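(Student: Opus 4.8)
The plan is to prove the substantive assertion, namely that $T^*$ is positively limited; the phrase ``that is, $T^*$ is almost limited'' is then the cited equivalence (valid because $T^*$ acts on the Dedekind complete lattice $F^*$, via \cite[Theorem 3]{elbour}), and ``hence $T^{**}$ is almost Dunford-Pettis'' follows by applying to $S=T^*$ the elementary fact, recorded just after the definition of positively limited, that a positively limited operator has almost Dunford-Pettis adjoint. So everything reduces to showing that $T^{**}$ sends positive weak$^*$-null sequences $(x_n^{**})$ in $E^{**}$ to norm null sequences in $F^{**}$. By Theorem \ref{teo4} (since $E^*$ is order continuous) I may also assume from the outset that $T$ is positively limited and that $T^*$ is almost Dunford-Pettis, although the core of the argument exploits the order continuity of $E^*$ more directly.

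The engine is a ``pull-down'' from $E^{**}$ to $E$ that I would isolate as a lemma. Since $E^*$ has order continuous norm, every functional on $E^{**}$ coming from $E^*$ is order continuous, and, crucially, the band generated by $J_E(E)$ in $E^{**}$ is the whole order continuous dual of $E^*$, which now equals all of $E^{**}$; hence $J_E(E)$ is order dense in $E^{**}$ and $x^{**}=\sup\{J_E u: u\in E^+,\ J_E u\le x^{**}\}$ for every $0\le x^{**}\in E^{**}$. Assume, for a contradiction, that $\|T^{**}x_n^{**}\|\ge\varepsilon$ for all $n$. Using the formula for the norm of a positive functional, choose $0\le y_n^*\in B_{F^*}$ with $x_n^{**}(T^*y_n^*)\ge\varepsilon$. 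As $T^*y_n^*\in E^*$ is order continuous on $E^{**}$ and $x_n^{**}$ is the supremum of the $J_E u$ below it, I obtain $x_n^{**}(T^*y_n^*)=\sup\{y_n^*(Tu): u\in E^+,\ J_E u\le x_n^{**}\}$, so there is $u_n\in E^+$ with $J_E u_n\le x_n^{**}$ and $y_n^*(Tu_n)\ge\varepsilon/2$, whence $\|Tu_n\|\ge\varepsilon/2$. Evaluating $J_E u_n\le x_n^{**}$ at positive $x^*\in E^*$ gives $0\le x^*(u_n)\le x_n^{**}(x^*)\to 0$, so $(u_n)$ is a bounded positive weakly null sequence in $E$.

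Since $T$ is almost Dunford-Pettis, I must manufacture a disjoint weakly null sequence in $E$ out of $(u_n)$, and here two cases diverge. For the formally weaker ``almost limited'' statement one starts instead from a disjoint (positive) weak$^*$-null $(x_n^{**})$; then $0\le J_E u_n\le x_n^{**}$ forces the $J_E u_n$, and hence (as $J_E$ is a Riesz homomorphism) the $u_n$, to be disjoint, so $(u_n)$ is already disjoint and weakly null and $T$ almost Dunford-Pettis yields $\|Tu_n\|\to 0$, the desired contradiction; this alone already shows that $T^{**}$ is almost Dunford-Pettis. For the full ``positively limited'' statement the pulled-down $(u_n)$ is only positive and weakly null, and the remaining job is to replace it by a disjoint sequence. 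I would do this by a Kadec--Pe\l czy\'nski / subsequence-splitting disjointification: after passing to a subsequence, approximate $u_{n_k}$ by a disjoint sequence $(d_k)\subseteq E^+$ with $\|u_{n_k}-d_k\|\to 0$, invoking order continuity of $E^*$ (which governs the disjoint structure of weakly null sequences in $E$) and order continuity of $F$ through the weakly null images $Tu_{n_k}$. Then $(d_k)$ is disjoint and weakly null, $T$ almost Dunford-Pettis gives $\|Td_k\|\to 0$, and $\|Tu_{n_k}\|\le\|Td_k\|+\|T\|\,\|u_{n_k}-d_k\|\to 0$ contradicts $\|Tu_{n_k}\|\ge\varepsilon/2$.

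I expect the disjointification in the last step to be the main obstacle: the pull-down lands in $E$, whose norm is not assumed order continuous, so one cannot disjointify weakly null sequences by the usual splitting lemma for order continuous lattices, and one must instead extract the disjoint behaviour from the order continuity of $E^*$ together with that of $F$ applied to the images $Tu_n$. The order-density pull-down is the other delicate point: $J_E(E)$ is order dense in $E^{**}$ precisely because $E^*$ is order continuous, and without this the supremum representation of $x_n^{**}$ fails and the whole reduction to $E$ collapses.
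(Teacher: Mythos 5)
Your reduction of the theorem to ``$T^{**}$ maps positive weak$^*$-null sequences of $E^{**}$ to norm null sequences of $F^{**}$'' is correct, but the argument you propose has two genuine gaps. The first is the pull-down itself: you assert that order continuity of the norm of $E^{*}$ makes $J_E(E)$ order dense in $E^{**}$, so that $x^{**}=\sup\{J_Eu: u\in E^{+},\ J_Eu\le x^{**}\}$. This is false. Take $E=C[0,1]$, whose dual $M[0,1]$ is an AL-space and hence has order continuous norm, and let $x^{**}\in E^{**}$ be the positive functional $\mu\mapsto\mu(\{0\})$; if $0\le J_Ef\le x^{**}$ for some $f\in C[0,1]$, then testing against $\delta_t$ with $t\ne 0$ gives $f(t)\le 0$, hence $f=0$, so no nonzero positive element of $E$ lies below $x^{**}$. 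Order continuity of the norm of $E^{*}$ gives $E^{**}=(E^{*})^{\sim}_{n}$, but Nakano's order-density theorem places $E$ order densely only in $(E^{\sim}_{n})^{\sim}_{n}$, which is in general a proper band of $E^{**}$. Without the supremum representation the reduction to a sequence $(u_n)$ in $E$ collapses, including your ``easy'' disjoint case (which, tellingly, would otherwise prove the conclusion with no hypothesis on $F$ at all). The second gap is the one you flag yourself: approximating a positive weakly null sequence of $E$ in norm by a disjoint sequence is not an available lemma here --- $E$ carries no order continuity assumption, and already in $C[0,1]$ a sequence of tent functions of height $1$ centered at a fixed point is positive, normalized and weakly null, yet stays at $\|\cdot\|_\infty$-distance $1$ from every disjoint sequence.

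The paper's proof avoids both problems by never leaving the dual side. It applies Lemma \ref{le4} to the operator $T^{*}\colon F^{*}\to E^{*}$: to prove that $T^{*}$ is positively limited it suffices to show $T^{**}(x_n^{**})(y_n^{*})\to 0$ for every positive weak$^*$-null sequence $(x_n^{**})$ in $E^{**}$ and every positive \emph{disjoint} bounded sequence $(y_n^{*})$ in $F^{*}$. Order continuity of the norm of $F$ makes such $(y_n^{*})$ weak$^*$-null, Theorem \ref{teo4} (via order continuity of the norm of $E^{*}$) makes $T$ positively limited, hence $\|T^{*}(y_n^{*})\|\to 0$, and then $|x_n^{**}(T^{*}(y_n^{*}))|\le \sup_k\|x_k^{**}\|\cdot\|T^{*}(y_n^{*})\|\to 0$. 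The idea you are missing is that Lemma \ref{le4} lets you place the required disjointness where the hypotheses can act on it, namely in $F^{*}$, instead of trying to manufacture it inside $E$.
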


\begin{proof} Let a positive disjoint bounded sequence $(y_{n}^{\ast})_{n=1}^{\infty}$ in $F^{\ast}$ be given. Since $F$ has order continuous norm, by \cite[Theorem 2.4.3]{nieberg} we know that $(y_{n}^{\ast})_{n=1}^{\infty}$ is weak$^*$-null.
From Theorem \ref{novot2} we know that $T$ is positively limited because $E^*$ has order continuous norm, so $(T^{\ast}(y_{n}^{\ast}))_{n=1}^\infty$ is norm null in $E^*$. For any positive weak$^*$-null, hence bounded, sequence $(x_{n}^{\ast\ast})_{n=1}^{\infty}$ in $E^{\ast\ast}$, we have
$$0\leq T^{\ast\ast}(x_{n}^{\ast\ast})(y_{n}^{\ast})=x_{n}^{\ast\ast}(T^{\ast}(y_{n}^{\ast}))\leq \|x_{n}^{\ast\ast}\| \cdot \|T^{\ast}(y_{n}^{\ast})\|\leq \sup_k \|x_k^{**}\| \cdot \|T^{\ast}(y_{n}^{\ast})\|\longrightarrow 0,$$
showing that $ T^{\ast\ast}(x_{n}^{\ast\ast})(y_{n}^{\ast})\longrightarrow 0.$ Calling on Lemma \ref{le4} we conclude that $T^{\ast}$ is positively limited, that is, $T^{\ast}$ is almost limited.
\end{proof}

Now we stress that our Theorem \ref{teo2} improves upon the known cases, namely Corollaries \ref{1cor1} and \ref{(iv)}.\\
(i) Theorem \ref{teo2} gets a conclusion stronger than the one in Corollary \ref{1cor1} with a weaker assumption.\\
(ii) Theorem \ref{teo2} gets the same conclusion of Corollary \ref{(iv)} with a weaker assumption and without asking $F$ to have property (d).

\begin{remark}\rm 
As to the other iterations of Theorems \ref{(i)} and \ref{(ii)}, we have:\\
$\bullet$ If one applies Theorem \ref{(i)} first and then Theorem \ref{(ii)}, then one ends up with a version of Corollary \ref{(iv)} without the assumption on property (d). We have just seen in (ii) that Theorem \ref{teo2} improves this result. \\
$\bullet$ Applying Theorem \ref{(ii)} first and then Theorem \ref{(i)}, we get a version of Theorem \ref{teo2} asking $F$ to have property (d), which is, of course, less general than Theorem \ref{teo2}.
\end{remark}

\section{When $T^*$ or $T^{**}$ is almost Dunford-Pettis even if $T$ is not}\label{s5}

The title of this section is self-explanatory. We begin by giving conditions, different from the ones of the previous sections and of the results in the literature we are aware of, under which adjoints of order bounded/regular operators are almost Dunford-Pettis.

\begin{proposition}
Let $F$ be a Banach lattice and let $K$ be a compact Hausdorff space. Every order bounded operator $T\colon C(K) \longrightarrow F$ is positively limited, hence  $T^{\ast}$ is almost Dunford-Pettis.
\end{proposition}
\begin{proof} Let $(y_{n}^{\ast})_{n=1}^{\infty}$ be a positive  weak$^*$-null sequence in $F^{\ast}$. For every $n$,
$$\|T^{\ast}(y_{n}^{\ast})\|=\sup\{|T^{\ast}(y_{n}^{\ast})(f)|:f\in B_{C(K)}\}\leq \sup\{y_{n}^{\ast}(|T(f)|):f\in B_{C(K)}\}.$$
For $f\in B_{C(K)}$ we have $|f|\leq \textbf{1}$ in $C(K)$, thus, since $T$ order bounded, there is $y\in F^{+}$ such that $|T(f)|\leq y$. Therefore, 
$\|T^{\ast}(y_{n}^{\ast})\|\leq y_{n}^{\ast}(y)\longrightarrow 0$, which shows that $T$ is positively limited.
\end{proof}

\begin{proposition}
Let $E, F$ be Banach lattices with $E^{\ast}$ having the the positive Schur property. The adjoint of every regular operator $T\colon E \longrightarrow F$ is almost Dunford-Pettis.
\end{proposition}
\begin{proof} Write $T=T_{1}-T_{2}$ where $T_{1}, T_{2}\colon  E \longrightarrow F$ are positive operators. Thus $T^{\ast}=T_{1}^{\ast}-T_{2}^{\ast}$. Given a  positive weakly null sequence $(y_{n}^{\ast})_{n=1}^{\infty}$ in $F^{\ast}$, the sequences $(T_{i}^{\ast}(y_{n}^{\ast}))_{n=1}^{\infty}, i=1,2$, are positive and weakly null in $E^{\ast}$, hence  $\|T_{i}^{\ast}(y_{n}^{\ast})\|\longrightarrow 0$. It follows that $\|T^{\ast}(y_{n}^{\ast})\|\leq \|T_{1}^{\ast}(y_{n}^{\ast})\|+\|T_{2}^{\ast}(y_{n}^{\ast})\|\longrightarrow 0$, proving that $T^{\ast}$ is almost Dunford-Pettis.
\end{proof}

\begin{proposition}\label{novopropo1} If the Banach lattice $E$ has the dual positive Schur property, then, no matter the Banach lattice $F$, every regular  operator $T \colon E \longrightarrow F$ is positively limited, hence $T^{\ast}$ is almost Dunford-Pettis.
\end{proposition}
\begin{proof} Let $T_1, T_2 \colon E \longrightarrow F$ be positive operators such that $T=T_{1}-T_{2}$. 
Given a positive weak$^*$-null sequence $(y_{n}^{\ast})_{n=1}^{\infty}$ in $F^{\ast}$, for all  $n\in\mathbb{N}$ and $i = 1,2$, $T_{i}^{\ast}(y_{n}^{\ast})$ is a positive functional. For every $x\in E$, $(T_1+T_2)(|x|) \in F$, so
\begin{align*}
|(|T^{\ast}(y_{n}^{\ast})|)(x)|&=|(|T_{1}^{\ast}(y_{n}^{\ast})-T_{2}^{\ast}(y_{n}^{\ast}))|(x)|\leq |T_{1}^{\ast}(y_{n}^{\ast})-T_{2}^{\ast}(y_{n}^{\ast})|(|x|)\\
&\leq (T_{1}^{\ast}(y_{n}^{\ast})+T_{2}^{\ast}(y_{n}^{\ast}))(|x|)=T_{1}^{\ast}(y_{n}^{\ast})(|x|)+T_{2}^{\ast}(y_{n}^{\ast})(|x|)\\
&=y_{n}^{\ast}(T_{1}(|x|))+ y_{n}^{\ast}(T_{2}(|x|))=y_{n}^{\ast}((T_{1} +T_{2})(|x|))\longrightarrow 0,
\end{align*}
proving that $(|T^{\ast}(y_{n}^{\ast})|)_{n=1}^\infty$ is positive and weak$^*$-null in $E^*$. By assumption we have $\|T^*(y_n^*)\| \longrightarrow 0$.  
\end{proof}

Recall that an operator $T$ from a Banach lattice $E$ to a Banach space $X$ is {\it order weakly compact} if $T([0,x])$ is relatively weakly compact in $X$ for every positive $x \in E$ (see \cite[Section 3.4, p.\,191]{nieberg} and \cite[p.\,318]{positiveoperators}). By \cite[Theorem 5.57]{positiveoperators} it follows easily that every almost Dunford-Pettis operator is order weakly compact. 
The identity operator on $c_0$ shows that the converse does not hold in general.

To motivate the next theorem, note that, on the one hand, the identity operator on $\ell_2$ is obviously weakly compact, hence order weakly compact. On the other hand, arguing with the sequence of canonical unit vectors we see easily that its adjoint, which coincides with itself, is not positively limited. 




\begin{theorem}\label{teo5} If  the Banach lattice $E$ has the dual positive Schur property, then, no matter the Banach space $X$, the adjoint $T^*$ of every bounded  order weakly compact operator $T \colon E \longrightarrow X$ is almost limited, hence $T^{**}$ is almost Dunford-Pettis.
\end{theorem}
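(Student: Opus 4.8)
The plan is to upgrade the order weak compactness of $T$ to full M-weak compactness using the hypothesis on $E$, and then to transport this stronger property to $T^{**}$ through the classical duality between M-weakly compact and L-weakly compact operators. The payoff is that M-weak compactness of $T^{**}$ yields both stated conclusions at once, since disjoint weak$^*$-null sequences and disjoint weakly null sequences in $E^{**}$ are, in particular, bounded disjoint sequences.

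First I would show that $T$ is M-weakly compact, that is, $\|T(x_n)\|\to 0$ for every bounded disjoint sequence $(x_n)_{n=1}^\infty$ in $E$. Given such a sequence, $(x_n^+)_{n=1}^\infty$ and $(x_n^-)_{n=1}^\infty$ are positive, bounded and disjoint (from $x_n^\pm \leq |x_n|$), hence order bounded by the hypothesis on $E$. Since $T$ is order weakly compact, \cite[Theorem 5.57]{positiveoperators}, applied exactly as in the proof of Theorem \ref{teo3}, gives $\|T(x_n^+)\|\to 0$ and $\|T(x_n^-)\|\to 0$, whence $\|T(x_n)\|\to 0$. This is the step where the condition that every positive bounded disjoint sequence be order bounded is indispensable: it is precisely what promotes order weak compactness, which only controls order bounded disjoint sequences, to M-weak compactness, which controls all bounded disjoint sequences.

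Next I would pass to the second adjoint. By the duality between M-weakly compact and L-weakly compact operators \cite[Theorem 5.64]{positiveoperators}, the M-weak compactness of $T\colon E\longrightarrow G$ gives that $T^*\colon G^*\longrightarrow E^*$ is L-weakly compact. Applying the dual form of the same duality to $T^*$, viewed as an operator from the Banach space $G^*$ into the Banach lattice $E^*$, yields that its adjoint $T^{**}\colon E^{**}\longrightarrow G^{**}$ is M-weakly compact. It is worth stressing that no lattice structure on $G$ intervenes here; this is exactly why the codomain may be an arbitrary Banach space and why the result reaches beyond Theorem \ref{teo3}.

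Finally I would read off the conclusions. Let $(x_n^{**})_{n=1}^\infty$ be a disjoint weak$^*$-null sequence in $E^{**}$; being weak$^*$-convergent it is norm bounded, so it is a bounded disjoint sequence and M-weak compactness of $T^{**}$ forces $\|T^{**}(x_n^{**})\|\to 0$. This is exactly the assertion that $T^*$ is almost limited, i.e. that $T^{**}$ sends disjoint weak$^*$-null sequences in $E^{**}$ to norm null sequences. Since every weakly null sequence in $E^{**}$ is $\sigma(E^{**},E^*)$-null, every disjoint weakly null sequence is in particular disjoint weak$^*$-null, so $T^{**}$ sends it to a norm null sequence; that is, $T^{**}$ is almost Dunford-Pettis. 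The only genuinely delicate point is the first step: recognizing that the hypotheses force the strictly stronger property of M-weak compactness and verifying this via \cite[Theorem 5.57]{positiveoperators}. Once this is in place, the duality arguments are routine invocations of standard theory.
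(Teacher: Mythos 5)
Your proof is correct, and it takes a genuinely different route from the paper's. The paper factors $T=S\circ R$ through a Banach lattice with order continuous norm via the factorization theorem for order weakly compact operators \cite[Theorem 5.58]{positiveoperators}, shows that $R$ is positively limited, bootstraps this to $R^{*}$ by means of Lemma \ref{le4} (which in turn rests on the disjointification Lemma \ref{le1}), and finally transfers the conclusion to $T^{**}=S^{**}\circ R^{**}$. You instead observe that the hypothesis on $E$ upgrades order weak compactness to M-weak compactness: a bounded disjoint sequence $(x_n)_{n=1}^\infty$ splits into the positive bounded disjoint sequences $(x_n^{+})_{n=1}^\infty$ and $(x_n^{-})_{n=1}^\infty$, which are order bounded by hypothesis, so \cite[Theorem 5.57]{positiveoperators} applies to each; then two applications of the M-weak/L-weak duality give that $T^{**}$ is M-weakly compact, and both conclusions drop out because disjoint weak$^*$-null and disjoint weakly null sequences in $E^{**}$ are in particular bounded disjoint sequences. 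Your argument is shorter, bypasses Lemmas \ref{le1} and \ref{le4} and the factorization theorem entirely, and yields a strictly stronger conclusion (namely that $T$ and $T^{**}$ are M-weakly compact, hence also weakly compact); what the paper's route buys is self-containment within its own lemma machinery and structural parallelism with the proof of Theorem \ref{teo3}. All the steps you invoke are sound; the only point to verify when writing this up is the exact theorem number for the M-weak/L-weak duality in the edition of Aliprantis--Burkinshaw cited.
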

\begin{proof}  Given an order weakly compact operator $T \colon E \longrightarrow X$, 
by \cite[Theorem 5.58]{positiveoperators} there are a Banach lattice $F$ with order continuous norm, a Riesz homomorphism  $R\colon E\longrightarrow F$ and a bounded linear operator $S\colon F\longrightarrow X$ such that $T=S\circ R$. By Proposition \ref{novopropo1} we have that $R$ is positively limited. 
Let $(x_{n}^{\ast\ast})_{n=1}^{\infty}$ be a positive weak$^*$-null sequence in $E^{\ast\ast}$ 
and let $(y_{n}^{\ast})_{n=1}^{\infty}$ be a positive disjoint bounded sequence in  $F^{\ast}$. Since $F$ has order continuous norm, by \cite[Theorem 2.4.3]{nieberg} we know that  $(y_{n}^{\ast})_{n=1}^{\infty}$ is weak$^*$-null, hence 
$(R^{\ast}(y_{n}^{\ast}))_{n=1}^\infty$ is norm null in $E^*$ because $R$ is positively limited. Therefore, 
$$0\leq R^{\ast\ast}(x_{n}^{\ast\ast})(y_{n}^{\ast})=x_{n}^{\ast\ast}(R^{\ast}(y_{n}^{\ast}))\leq \|x_{n}^{\ast\ast}\|\cdot \|R^{\ast}(y_{n}^{\ast})\|\leq \sup_{k} \|x_{k}^{\ast\ast}\| \cdot \|R^{\ast}(y_{n}^{\ast})\|\longrightarrow 0,$$
which proves that $ R^{\ast\ast}(x_{n}^{\ast\ast})(y_{n}^{\ast})\longrightarrow 0$. Calling on Lemma \ref{le4} once again we get $\| R^{\ast\ast}(x_{n}^{\ast\ast})\|\longrightarrow 0$, thus
$$\|T^{\ast\ast}(x_{n}^{\ast\ast})\|=\|(S\circ R)^{\ast\ast}(x_{n}^{\ast\ast})\|=\|S^{\ast\ast}( R^{\ast\ast}(x_{n}^{\ast\ast}))\|\leq \|S^{\ast\ast}\|\cdot \|R^{\ast\ast}(x_{n}^{\ast\ast})\|\longrightarrow 0.$$
It follows that $(T^{\ast\ast}(x_{n}^{\ast\ast}))_{n=1}^\infty $ is norm null. Using that dual Banach lattices are  Dedekind complete, hence have property (d),  we conclude that $T^{\ast}$ is almost limited.
\end{proof}

Since weakly compact operators, L-weakly compact operators, M-weakly compact operators and order L-weakly compact operators are all order weakly compact (see \cite[Sections 5.2 and 5.3, Theorem 5.61 and p.\,319]{positiveoperators} and \cite{moussa}), Theorem \ref{teo5} applies to all these classes of operators. 
Moreover, the following consequence holds.

\begin{corollary} For all Banach spaces $X$ and Hausdorff compact spaces $K$, the adjoint $T^*$ of every weakly compact operator $T \colon C(K) \longrightarrow X$ is almost limited, hence $T^{**}$ is almost Dunford-Pettis.
\end{corollary}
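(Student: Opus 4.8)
The plan is to obtain the corollary as an immediate specialization of Theorem \ref{teo5} to the Banach lattice $E = C(K)$; all that is needed is to confirm that $C(K)$ meets the structural hypothesis of that theorem and that a weakly compact operator satisfies its operator hypothesis. First I would check the condition on $E = C(K)$. For any bounded sequence $(f_n)_{n=1}^\infty$ in $C(K)$, setting $c := \sup_k \|f_k\|_\infty < \infty$ we have $|f_n| \le c\,\mathbf{1}$ for every $n$, where $\mathbf{1}$ denotes the constant function equal to $1$ on $K$; hence $(f_n)_{n=1}^\infty$ is order bounded. In particular, every positive bounded disjoint sequence in $C(K)$ is order bounded, which is exactly the hypothesis imposed on $E$ in Theorem \ref{teo5}.

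Next I would verify that a weakly compact operator $T \colon C(K) \longrightarrow G$ qualifies as a \emph{bounded order weakly compact} operator in the sense required by Theorem \ref{teo5}. Such a $T$ is of course bounded, and, as recalled at the start of Section \ref{s5}, every weakly compact operator is order weakly compact: for each positive $f \in C(K)$ the order interval $[0,f]$ lies in the ball $\|f\|_\infty B_{C(K)}$, so its image $T([0,f])$ is contained in the relatively weakly compact set $T(\|f\|_\infty B_{C(K)})$ and is therefore itself relatively weakly compact.

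With both hypotheses in hand, Theorem \ref{teo5} applies with $E = C(K)$ and yields at once that $T^*$ is almost limited and hence that $T^{**}$ is almost Dunford-Pettis, which is precisely the assertion of the corollary. I do not anticipate any genuine obstacle: the mathematical substance has already been absorbed into Theorem \ref{teo5}, and the only remaining work is the two elementary verifications above. If one insisted on isolating a single delicate point, it would be the passage from weak to order weak compactness, but this reduces to the trivial observation that a subset of a relatively weakly compact set is relatively weakly compact.
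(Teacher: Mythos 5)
Your proposal is correct and matches the paper's (implicit) argument exactly: the paper also derives the corollary from Theorem \ref{teo5} by noting that bounded sequences in $C(K)$ are order bounded (dominated by a multiple of the constant function $\mathbf{1}$) and that weakly compact operators are order weakly compact. The two verifications you supply are precisely the ones needed, and no further work is required.
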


To motivate the next consequence of Proposition \ref{novopropo1}, let us see that adjoints of bounded operators are not always almost limited, even if the operator takes values in a Banach lattice with the dual positive Schur property. 

\begin{example} \rm In this example, $B_{c_{0}^{\ast}}$ is endowed with the topology induced by the weak$^*$ topology on $c_0^*$. Let $T\colon c_{0}\longrightarrow C[0,1]$ be defined by $T(x)(t)=f(t)(x)$, where $f\colon [0,1]\longrightarrow B_{c_{0}^{\ast}}$ is a surjective continuous function, whose existence is guaranteed by the Hahn-Mazurkiewicz Theorem. It is clear that $T$ is linear and  $\|T(x)\|=\|x\|$ for every $x\in c_{0}$. 
Let us see that the adjoint $T^{\ast}\colon C[0,1]^{\ast}\longrightarrow \ell_{1}$ 
fails to be almost limited. Indeed,  the sequence $(J_{c_{0}}(e_{n}))_{n=1}^{\infty}$ is disjoint, positive and weak$^*$-null in $\ell_{\infty}$; and, for each  $n\in\mathbb{N}$,
$$\|T^{\ast\ast}(J_{c_{0}}(e_{n}))\|=\|J_{C[0,1]}(T(e_{n}))\|=\|T(e_{n})\|=\|e_{n}\|=1.$$
\end{example}

Nevertheless, as adjoints of regular operators are regular, the next result follows from the application of Proposition \ref{novopropo1} to the adjoint $T^{\ast}\colon F^{\ast}\longrightarrow E^{\ast}$.  

\begin{corollary}
 Let $F$  be a Banach lattice such that 
 $F^{\ast}$ has the dual positive Schur property. Then, no matter the Banach lattice $E$, the adjoint $T^*$ of every regular operator $T \colon E \longrightarrow F$ is almost limited, hence $T^{**}$ is almost Dunford-Pettis.
\end{corollary}

Our final result concerns second adjoints of weakly compact operators on Banach lattices $E$ eventually not satisfying the assumption of Theorem \ref{teo5}.

\begin{proposition} Let $T\colon E\longrightarrow F$ be a weakly compact operator between Banach lattices and suppose that one of the following holds:\\ 
\indent {\rm (a)} $T$ is regular and $F$ has the positive Schur property.\\
\indent {\rm (b)} $T$ is positive, $F$ has order continuous norm and $E^{\ast}$ has the positive Schur property.\\
Then $T^{\ast}$ is almost limited, hence $T^{\ast\ast}$ is almost Dunford-Pettis.
\end{proposition}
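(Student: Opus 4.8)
The plan is to prove each of the two cases separately, in both cases reducing to showing that $T^*$ is almost limited and then invoking the already-observed fact that $T^*$ almost limited implies $T^{**}$ almost Dunford-Pettis. Recall that a regular operator $T^*$ is almost limited when $T^{**}$ sends disjoint weak$^*$-null sequences of $F^*$ to norm null sequences; by Lemma \ref{le4} this amounts to controlling the pairing $T^{**}(x_n^{**})(y_n^*)=x_n^{**}(T^*(y_n^*))$ along a positive disjoint bounded sequence $(y_n^*)$ in $F^*$ and a positive weak$^*$-null sequence $(x_n^{**})$ in $E^{**}$.

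For case (a), the key observation is that the weak compactness of $T$ forces $T^{**}$ to land inside $J_F(F)$, since $T^{**}(E^{**})\subseteq J_F(F)$ is exactly the characterization of weak compactness via Gantmacher's theorem. Thus for $x^{**}\in E^{**}$ we have $T^{**}(x^{**})=J_F(y)$ for some $y\in F$, and the pairing becomes $J_F(y)(y_n^*)=y_n^*(y)$. I would then use that $F$ has the positive Schur property: a positive disjoint bounded sequence $(y_n^*)$ need not be weak$^*$-null in general, but since $T$ is regular and we are testing against images landing in $F$, the plan is to transfer a disjoint weak$^*$-null sequence in $F^*$ through and exploit that the positive Schur property of $F$ gives order continuity of $F^*$ (equivalently, that disjoint weak$^*$-null sequences behave well). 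The cleanest route is: take a disjoint weak$^*$-null $(y_n^*)$ in $F^*$, write $T^*=T_1^*-T_2^*$ from a regular decomposition, and estimate $\|T^*(y_n^*)\|$ directly using that $T^{**}$ maps into $J_F(F)$ together with the positive Schur property of $F$ to force $T^*(y_n^*)\to 0$ in norm.

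For case (b), I would follow the template of the proof of Theorem \ref{teo2} very closely. Here $E^*$ has the positive Schur property and $F$ has order continuous norm. Given a positive disjoint bounded sequence $(y_n^*)$ in $F^*$, order continuity of the norm of $F$ yields by \cite[Theorem 2.4.3]{nieberg} that $(y_n^*)$ is weak$^*$-null. The positive operator $T$ is automatically almost Dunford-Pettis because $E^*$ having the positive Schur property forces $E$ to have order continuous norm and, more to the point, makes positive disjoint weakly null sequences in $E$ scarce; more directly, the positive Schur property of $E^*$ makes $T^*$ send positive weak$^*$-null sequences to norm null sequences, i.e. $T$ is positively limited outright. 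From $\|T^*(y_n^*)\|\to 0$ I would then estimate $T^{**}(x_n^{**})(y_n^*)=x_n^{**}(T^*(y_n^*))\le \sup_k\|x_k^{**}\|\cdot\|T^*(y_n^*)\|\to 0$ along any positive weak$^*$-null $(x_n^{**})$ in $E^{**}$, and conclude via Lemma \ref{le4} that $T^*$ is almost limited.

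The main obstacle I expect is case (a): unlike case (b), the positive Schur property is imposed on $F$ rather than on a dual, so there is no direct statement converting a disjoint sequence in $F^*$ into a norm null image. The crux is to leverage weak compactness correctly — specifically that $T^{**}$ factors through $J_F(F)$ — to reduce testing against sequences in $F^*$ to testing against the reflexive-like behavior inside $F$ itself, and then to feed this into the positive Schur property. I would handle the regularity by splitting $T=T_1-T_2$ into positive parts and controlling each $T_i^*(y_n^*)$, being careful that the positive Schur property of $F$ is the precise hypothesis that makes the relevant disjoint sequences norm null after passing through $T$. Checking that this factorization-plus-positive-Schur argument genuinely yields $\|T^*(y_n^*)\|\to 0$, rather than merely weak$^*$-null, is where the real work lies.
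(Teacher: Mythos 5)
Your case (b) is essentially the paper's argument and is workable, with one imprecision: the assertion that the positive Schur property of $E^*$ by itself makes $T^*$ send positive weak$^*$-null sequences of $F^*$ to norm null sequences is false as stated (take $T={\rm id}_{c_0}$, so $T^*={\rm id}_{\ell_1}$: the unit vectors form a positive weak$^*$-null non-norm-null sequence even though $\ell_1$ has the positive Schur property). You must first invoke the weak compactness of $T$: then $T^*$ is weak$^*$-to-weak continuous, so $(T^*(y_n^*))_{n=1}^\infty$ is positive and \emph{weakly} null in $E^*$, and only at that point does the positive Schur property of $E^*$ give $\|T^*(y_n^*)\|\to 0$. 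After that your pairing estimate plus Lemma \ref{le4} closes case (b) exactly as in Theorem \ref{teo2} (the paper finishes instead with a Dodds--Fremlin criterion, but your route is fine since $T^*$ is positive there).

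Case (a) has a genuine gap. First, the quantity you propose to control, $\|T^*(y_n^*)\|$ for a disjoint weak$^*$-null sequence $(y_n^*)$ in $F^*$, is the wrong target: driving that to $0$ would prove that $T$ is almost limited, whereas the claim is that $T^*$ is almost limited, i.e.\ that $(T^*)^*=T^{**}$ annihilates disjoint (equivalently, positive) weak$^*$-null sequences of $E^{**}$. Second, the auxiliary claim that the positive Schur property of $F$ yields order continuity of the norm of $F^*$ is false: $\ell_1$ has the positive Schur property while $\ell_\infty$ is not order continuous. The missing idea is to stay on the $E^{**}$ side throughout and never test against disjoint sequences in $F^*$ at all: given a positive weak$^*$-null sequence $(x_n^{**})$ in $E^{**}$, Gantmacher gives $T^{**}(x_n^{**})=J_F(y_n)$ with $y_n\in F$, and writing $T=T_1-T_2$ with $T_i\geq 0$ one estimates, for every $y^*\in F^*$,
$$|y^*|(|y_n|)=J_F(|y_n|)(|y^*|)=|T^{**}(x_n^{**})|(|y^*|)\leq (T_1^{**}+T_2^{**})(x_n^{**})(|y^*|)=x_n^{**}\big((T_1^*+T_2^*)(|y^*|)\big)\longrightarrow 0,$$
so $(|y_n|)_{n=1}^\infty$ is weakly null \emph{in $F$}. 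The positive Schur property of $F$, applied to this positive weakly null sequence in $F$ rather than to anything in $F^*$, then gives $\|T^{**}(x_n^{**})\|=\|y_n\|=\||y_n|\|\to 0$, which is exactly the statement that $T^*$ is positively limited, hence almost limited. Note also that Lemma \ref{le4} is not available in case (a) in the way your opening paragraph suggests, since there $T$ is only regular and so $T^*$ need not be positive.
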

\begin{proof}
 Let $(x_{n}^{\ast\ast})_{n=1}^{\infty}$ be a positive weak$^*$-null sequence in $E^{\ast\ast}$. The weak compactness of $T$ gives $T^{\ast\ast}(E^{\ast\ast})\subseteq J_{F}(F)$ \cite[Theorem 3.5.8]{meg}, so for each $n\in\mathbb{N}$ there exists $y_{n}\in F$ such that $T^{\ast\ast}(x_{n}^{\ast\ast})=J_{F}(y_{n})$. Supposing (a), there are positive operators $T_{1},T_{2}\colon E \longrightarrow F$ so that $T=T_{1}-T_{2}$, hence $T^{\ast\ast}=T_{1}^{\ast\ast}-T_{2}^{\ast\ast}$. Since $J_{F}$ is a Riesz  homomorphism and $(x_n^{**})_{n=1}^\infty$ is weak$^*$-null, for every $y^{\ast}\in F^{\ast}$,
\begin{align*}
|y^{\ast}(|y_{n}|)|&\leq |y^{\ast}|(|y_{n}|)=J_{F}(|y_{n}|)(|y^{\ast}|)=|J_{F}(y_{n})|(y^{\ast})\\
&=|T^{\ast\ast}(x_{n}^{\ast\ast})|(|y^{\ast}|)
\leq |T^{\ast\ast}|(x_{n}^{\ast\ast})(|y^{\ast}|)\\
&= |T_{1}^{\ast\ast}-T_{2}^{\ast\ast}|(x_{n}^{\ast\ast})(|y^{\ast}|)\leq (T_{1}^{\ast\ast}+T_{2}^{\ast\ast})(x_{n}^{\ast\ast})(|y^{\ast}|) \\
&=T_{1}^{\ast\ast}(x_{n}^{\ast\ast})(|y^{\ast}|)+T_{2}^{\ast\ast}(x_{n}^{\ast\ast})(|y^{\ast}|)=x_{n}^{\ast\ast}((T_{1}^{\ast}+T_{2}^{\ast})(|y^{\ast}|))\longrightarrow 0,
\end{align*}
showing that the sequence $(|y_{n}|)_{n=1}^\infty$ is weakly null. The positive Schur property of $F$ gives 
$$\|T^{\ast\ast}(x_{n}^{\ast\ast})\|=\|J_{F}(y_{n})\|=\|y_{n}\| =\| |y_{n}|\|\longrightarrow 0,$$
which proves that $T^{\ast}$ is almost limited.

Now assume (b) and let $(y_{n}^{\ast})_{n=1}^{\infty}$ be a positive disjoint bounded sequence in $F^{\ast}$. The order continuous norm of $F$ implies that the sequence  $(y_{n}^{\ast})_{n=1}^{\infty}$ is weak$^*$-null \cite[Theorem 2.4.3]{nieberg}. The weak compactness of $T$ implies that $T^{\ast}$ is weak$^*$-weak continuous \cite[Theorem 3.5.14]{meg}, so  $(T^{\ast}(y_{n}^{\ast}))_{n=1}^\infty$ is weakly null in $E^{\ast}$. It follows that $\|T^{\ast}(y_{n}^{\ast})\|\longrightarrow 0$ because  each $T^{\ast}(y_{n}^{\ast})$ is positive and $E^{\ast}$ has the positive Schur property. From
\begin{align*}
|y_{n}^{\ast}(y_{n})|&\leq y_{n}^{\ast}(|y_{n}|)=J_{F}(|y_{n}|)(y^{\ast}_{n})=|J_{F}(y_{n})|(y^{\ast}_{n})=|T^{\ast\ast}(x_{n}^{\ast\ast})|(y^{\ast}_{n})\\
&= T^{\ast\ast}(x_{n}^{\ast\ast})(y^{\ast}_{n})=x_{n}^{\ast\ast}(T^{\ast}(y^{\ast}_{n}))\leq \|x_{n}^{\ast\ast}\| \|T^{\ast}(y^{\ast}_{n})\|\longrightarrow 0,
\end{align*}
 we get $y_{n}^{\ast}(y_{n})\longrightarrow 0$. From \cite[Corollary 2.6]{dofre} it follows that $\|T^{\ast\ast}(x_{n}^{\ast\ast})\|=\|J_{F}(y_{n})\|=\|y_{n}\|\longrightarrow 0$ and the proof is complete.
\end{proof}

\bigskip
\noindent G. Botelho ~~~~~~~~~~~~~~~~~~~~~~~~~~~~~~~~~~~~~~~~\,L. A. Garcia \\Faculdade de Matem\'atica~~~~~~~~~~~~~~~~~~~~~~Instituto de Ciências Exatas\\
Universidade Federal de Uberl\^andia~~~~~~~~ Universidade Federal de Minas Gerais\\
38.400-902 -- Uberl\^andia -- Brazil~~~~~~~~~~~~ 31.270-901 -- Belo Horizonte -- Brazil\\
e-mail: botelho@ufu.br ~~~~~~~~~~~~~~~~~~~~~~~~~e-mail: garcia\_1s@hotmail.com

%
%
%

\end{document}